\newcommand{\spacing}[1]{\renewcommand{\baselinestretch}{#1}\setlength{\footnotesep}{\baselinestretch\footnotesep}}
\theoremstyle{plain}
\newtheorem{theorem}{Theorem}[section]
\newtheorem{lemma}[theorem]{Lemma}
\theoremstyle{definition}
\newcommand{\N}{\mathbb{N}}
\newcommand{\DEF}{\sl}       
\title{A tighter Erd\H{o}s-P\'osa function for long cycles}
\author{Samuel Fiorini and Audrey Herinckx}
\address{Universit\'e libre de Bruxelles,
Department of Mathematics CP 216,
Boulevard du Triomphe,
B-1050 Brussels,
Belgium.
{\tt \{sfiorini,auherinc\}@ulb.ac.be}}
\begin{document}
\maketitle

\begin{abstract}
We prove that there exists a bivariate function $f$ with $f(k,\ell) = O(\ell \cdot k \log k)$ such that for every naturals $k$ and $\ell$, every graph $G$ has at least $k$ vertex-disjoint cycles of length at least $\ell$ or a set of at most $f(k,\ell)$ vertices that meets all cycles of length at least $\ell$. This improves a result by Birmel\'e, Bondy and Reed (Combinatorica, 2007), who proved the same result with $f(k,\ell) = \Theta(\ell \cdot k^2)$.
\end{abstract}

\section{Introduction}

A collection of graphs $\mathcal{H}$ is said to have the {\DEF Erd\H{o}s-P\'osa} property if there exists a function $f : \mathbb{N} \to \mathbb{R}_+$ such that for every natural $k$ and every graph $G$ at least one of the following two assertions holds:

\begin{itemize}
\item $G$ contains a collection of $k$ vertex-disjoint subgraphs $G_1$, \ldots, $G_k$, each isomorphic to a graph in $\mathcal{H}$;
\item $G$ contains a set $X$ of $f(k)$ vertices such that no subgraph of $G-X$ is isomorphic to a graph in $\mathcal{H}$.
\end{itemize}

A collection $G_1$, \ldots, $G_k$ as above is called a {\DEF packing} and a set $X$ as above is called a {\DEF transversal}. These definitions are motivated by a celebrated result of Erd\H{o}s and P\'osa \cite{EP}. Denoting by $C_t$ the cycle of length $t$, they proved that $\mathcal{H} = \{C_t \mid t \geqslant 3\}$ has the Erd\H{o}s-P\'osa property. They obtain a function $f$ in $\Theta(k \log k)$ and prove that this function $f$ is best possible, up to a constant. 

Our main result is as follows.

\begin{theorem}
\label{thm:main}
There exists a function $f : \mathbb{N}^2 \to \mathbb{R}_+$ with $f(k,\ell) = O(\ell \cdot k \log k)$ such that for every $k, \ell \in \N$ and every graph $G$, at least one of the two following assertions holds:

\begin{itemize}
\item $G$ contains $k$ vertex-disjoint cycles of length at least $\ell$;
\item $G$ contains a set of $f(k,\ell)$ vertices meeting all the cycles of length at least $\ell$.
\end{itemize}
\end{theorem}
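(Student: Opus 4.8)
My plan is to mirror the classical Erd\H{o}s--P\'osa argument, in which a transversal of size $O(k\log k)$ is produced from a minimum-degree-$3$ topological core, and to promote ``cycle'' to ``cycle of length $\ge \ell$'' throughout, absorbing the extra length into the announced factor $\ell$. Since every cycle is contained in a single block, I would first pass to the block tree and reduce to the case where $G$ is $2$-connected, checking that packings and transversals of the individual blocks reassemble into a packing and a transversal of $G$ --- the only subtlety being the cut vertices shared by adjacent blocks.

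Assuming $G$ is $2$-connected, I would build the analogue of the minimum-degree-$3$ core: delete vertices of degree $\le 1$ and \emph{suppress} vertices of degree $2$, but record on each resulting edge the length of the path it replaces. This yields an edge-weighted multigraph $H$ on the set $B$ of branch vertices of $G$ in which a cycle of length $\ge \ell$ in $G$ corresponds exactly to a cycle of total weight $\ge \ell$ in $H$; as in the classical proof, every long cycle of $G$ meets $B$, so $B$ is a transversal (the only exceptions, isolated cycles and loops, are collected separately). The goal is then a packing lemma: a suitable largeness condition on $H$ should force $k$ vertex-disjoint heavy cycles, which lift to $k$ disjoint long cycles of $G$ along internally disjoint suppressed paths.

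The main obstacle is that, in sharp contrast with ordinary cycles, no density or minimum-degree hypothesis forces long cycles: a core may have arbitrarily many branch vertices yet circumference $6$, as $K_{3,n}$ shows, so ``$|B|$ large'' does not by itself yield even one long cycle, let alone $k$ disjoint ones. To get around this I would split according to circumference. By Birmel\'e's theorem, the part of the graph carrying no cycle of length $\ge \ell$ has treewidth $< \ell$, and on such a part a long-cycle transversal of controlled size can be extracted from a bounded-width tree decomposition; on the genuinely long-cycle-rich part I would iterate a Dirac/expansion-type step that produces a cycle of length in $[\ell, O(\ell)]$ --- a \emph{short} long cycle --- set it aside, and re-establish the degree condition, repeating $k$ times.

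Here the delicate point, and the one that decides whether the final bound is $O(\ell\,k\log k)$ or something weaker, is the accounting in the iterated extraction: a crude treewidth-based transversal loses polynomial factors in $k$, so the $\log k$ must come from the same amortized counting as in Erd\H{o}s--P\'osa, charging each extracted long cycle only $O(\ell\log k)$ vertices once the cascade triggered by restoring the minimum-degree condition is paid for. Tuning the short-long-cycle extraction so that its cost and the treewidth-controlled part together sum to $O(\ell\,k\log k)$ is, I expect, the heart of the proof.
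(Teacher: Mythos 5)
Your sketch correctly identifies the central obstacle---a large minimum-degree-$3$ core need not contain even one long cycle (your $K_{3,n}$ example is exactly right)---but the workaround you propose does not close the gap, and you say so yourself when you defer the accounting (``is, I expect, the heart of the proof''). Concretely: the ``split according to circumference'' is not a well-defined decomposition. The part of the graph ``carrying no cycle of length $\ge \ell$'' needs no transversal at all; the difficulty is long cycles that straddle both parts, and Birmel\'e's treewidth bound gives you no handle on those (moreover, identifying the long-cycle-free part presupposes a transversal, which is circular). The iterated ``Dirac/expansion-type step'' producing a cycle of length in $[\ell, O(\ell)]$ is stated with no hypothesis under which it is valid---no density or degree condition forces such cycles, which is precisely the obstacle you started from---and there is no argument that re-establishing the degree condition after each extraction preserves whatever richness the step requires. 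Finally, the amortized $O(\ell\, k\log k)$ bookkeeping, which is where the theorem lives or dies, is left entirely open. So the proposal is a plan with the key ideas missing, not a proof.

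The missing idea in the paper is to build the core inside $G$ so that it has \emph{no short cycles at all}, rather than obtaining it by suppressing degree-$2$ vertices (which gives no control on cycle lengths). One chooses a subgraph $H$ of $G$ whose vertices all have degree $2$ or $3$ in $H$, which contains no cycle of length $< \ell$, and which maximizes the number $|U|$ of degree-$3$ vertices. Then Lemma \ref{lem:cubic} applied to the multigraph obtained from $H$ by suppression yields $k$ vertex-disjoint cycles as soon as $|U| \geqslant s_k = \Theta(k\log k)$, and these cycles are automatically long because $H$ has none short: the girth condition is designed into the core, instead of being extracted from it. On the transversal side, one takes the set $U'$ of vertices of $H$ within distance $\ell/2$ of $U$ (at most $(3\ell/2+1)|U|$ vertices); the maximality of $H$ then forces every long cycle avoiding $U'$ to live in the ``closure'' $K'$ of a single cycle component $K$ of $H-U$, and $K'$ cannot contain two disjoint long cycles, so Lemma \ref{lem:nu_eq_1} (the Birmel\'e--Bondy--Reed case $k=2$, not their treewidth theorem) supplies a $(2\ell+3)$-vertex transversal per component; a further maximality argument handles long cycles meeting $H$ in a single vertex. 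A clean inductive reduction to the case where $G$ has no cycle of length in $[\ell,2\ell]$ makes the distance arguments work. None of your proposed machinery (block-tree reduction, edge-weighted suppression, treewidth, iterated extraction) appears, and the $\log k$ enters only through the classical cubic lemma, with all $\ell$-dependence confined to the radius-$\ell/2$ ball around $U$ and the per-component transversals.
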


This result implies in particular that the collection $\mathcal{H} := \{C_t \mid t \geqslant \ell\}$ has the Erd\H{o}s-P\'osa property for each fixed natural $\ell$. Birmel\'e, Bondy and Reed \cite{BBR07} proved Theorem \ref{thm:main} with $f(k,\ell) = \Theta(\ell \cdot k^2)$ and left as an open problem to find the correct order of magnitude of $f$. Theorem \ref{thm:main} essentially settles this problem. Our function $f$ is best possible up to a constant for each fixed $\ell$. Moreover, it is also best possible up to a constant for each fixed $k$. However, we do not known whether it is best possible up to a constant when both $k$ and $\ell$ vary.

Before giving the outline of this paper, we mention a few relevant references concerning the case where $\mathcal{H}$ consists of all the graphs containing a fixed graph $H$ as a minor. Robertson and Seymour have shown that $\mathcal{H}$ has the Erd\H{o}s-P\'osa property if and only if $H$ is planar~\cite{RS_five}. They left wide open the problem of determining the order of magnitude of the best possible function $f$, for each fixed planar graph $H$. Our main result answers this problem when $H$ is a cycle. A recent paper of Fiorini, Joret and Wood~\cite{FJW12} answers the problem when $H$ is a forest. In this case, it turns out that $f$ can be taken to be linear in $k$.

The outline of the rest of the paper is as follows. We begin with some preliminaries in Section \ref{sec:preliminaries}. The proof of Theorem
\ref{thm:main} is given in Section \ref{sec:proof}.

\section{Preliminaries} \label{sec:preliminaries}

Before proving our main result, we state a few lemmas that are used in the proof. For $k \in \N$, let 
$$
s_k := 
\begin{cases}
4k \log k + 4k \log \log k + 16k &\text{if } k \geqslant 2\\
1 &\text{if } k \leqslant 1\ .
\end{cases}
$$
Notice that $s_k = \Theta(k \log k)$.

\begin{lemma}[Erd\H{o}s and P\'osa \cite{EP}, Diestel \cite{DiestelBook}]
\label{lem:cubic}
For every $k \in \N$, every cubic multigraph $H$ with at least $s_k$ vertices contains $k$ vertex-disjoint cycles. 
\end{lemma}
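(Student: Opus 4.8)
The plan is to argue by induction on $k$. For the base cases $k \le 1$ there is nothing to do beyond the observation that a multigraph of minimum degree $3$ always contains a cycle, so $s_k = 1$ suffices. For the inductive step I would take a cubic multigraph $H$ with $n := |V(H)| \ge s_k$, extract one short cycle $C$, delete its vertices, and repair what remains into a \emph{smaller} cubic multigraph $H'$ on at least $s_{k-1}$ vertices. Applying the induction hypothesis to $H'$ then yields $k-1$ pairwise disjoint cycles which, together with $C$, give the desired $k$ disjoint cycles in $H$.

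Two quantitative ingredients drive the step. First, I would choose $C$ to be a \emph{shortest} cycle of $H$. Since $H$ has minimum degree $3$, a breadth-first-search (Moore-type) argument shows that the ball of radius $\lfloor (g-1)/2\rfloor$ around any vertex induces a tree that branches at least doubly, whence $n \ge 3\cdot 2^{\lfloor (g-1)/2\rfloor}-2$ and the girth $g=|V(C)|$ satisfies $g \le 2\log_2 n + O(1)$; loops and parallel edges only make $g$ smaller. Second, after deleting $V(C)$ I would repeatedly delete vertices of degree at most $1$ and suppress (smooth) vertices of degree $2$ until the resulting multigraph $H'$ is again cubic. A cycle of $H'$ expands back to a cycle of $H-V(C)$ through the suppressed degree-$2$ paths, and disjoint cycles expand to disjoint cycles, so this repair is harmless for the packing. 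To control the loss I would track the potential $\Phi := m - \tfrac32 t$, where $m$ and $t$ are the current numbers of edges and vertices: deleting $V(C)$ (which removes $g$ cycle edges and $g$ leaving edges) drives $\Phi$ from $0$ to $-g/2$; each suppression or degree-$1$ deletion raises $\Phi$ by $\tfrac12$ and each degree-$0$ deletion by $\tfrac32$; and the process ends at $\Phi=0$. Hence the number of vertices removed during the repair is at most $g$, and in total $|V(H')| \ge n - 2g$.

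It then remains to verify the arithmetic $n - 2g \ge s_{k-1}$. Writing $\phi(n) := n - 4\log_2 n - c$ for a suitable constant $c$, which is increasing, this reduces to checking $\phi(s_k) \ge s_{k-1}$, i.e. $s_k - s_{k-1} \ge 4\log_2 s_k + O(1)$, for all $k \ge 2$. Here the very shape of $s_k$ is what I expect to be the delicate point: the dominant term $4k\log_2 k$ contributes only $\approx 4\log_2 k$ to $s_k - s_{k-1}$, which barely matches the $4\log_2 k$ coming from $g$, so it is the secondary term $4k\log_2\log_2 k$ whose increment $\approx 4\log_2\log_2 k$ must absorb the $\log\log$ part hidden inside $4\log_2 s_k \approx 4\log_2 k + 4\log_2\log_2 k$, while the linear term $16k$ supplies the remaining additive slack. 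The main obstacle is thus not the structural reduction but making these constants fit \emph{simultaneously} for every $k \ge 2$; I would carry a concrete, slightly generous girth estimate through the computation so that the two $\log\log$ contributions cancel with room to spare, rather than trying to optimize them.
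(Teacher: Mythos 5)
This lemma is not proved in the paper at all---it is quoted with citations to Erd\H{o}s--P\'osa and to Diestel's book---and your proposal is essentially a correct reconstruction of the cited textbook proof: induction on $k$, removal of a shortest cycle $C$ (whose length is $O(\log n)$ by the Moore-bound argument, loops and parallel edges being even shorter), repair of $H-V(C)$ into a smaller cubic multigraph $H'$ with $|V(H')| \geqslant |V(H)| - 2|C|$ by deletions and suppressions, lifting of disjoint cycles from $H'$ back to $H$, and the closing arithmetic $s_k - 4\log_2 s_k \geqslant s_{k-1}$. Your potential-function bookkeeping for the repair phase and your diagnosis of why $s_k$ has exactly the shape $4k\log k + 4k\log\log k + 16k$ (the $\log\log$ term and the linear term are there precisely to absorb the girth bound $\approx 4\log_2 s_k \approx 4\log k + 4\log\log k + 8$ in the induction) agree with the source, so there is nothing to correct.
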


Let $\ell \in \mathbb{N}$ be fixed. Below, we call a cycle {\DEF long} if its length is at least $\ell$, and {\DEF short} otherwise. Our proof relies on the following lemma (see below). Birmel\'e, Bondy and Reed conjecture that the lemma still holds when $2\ell+3$ is replaced by $\ell$, which would be tight.

\begin{lemma}[Birmel\'e, Bondy and Reed \cite{BBR07}]
\label{lem:nu_eq_1}
If a graph $G$ does not contain two vertex-disjoint long cycles, then it contains a set of at most $2\ell + 3$ vertices that meets all the long cycles.
\end{lemma}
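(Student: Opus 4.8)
The plan is to fix a particularly well-chosen long cycle and show that either it is short enough to serve directly as a transversal, or its minimality forces all long cycles through a small region. If $G$ has no long cycle then the empty set works, so assume one exists and let $C$ be a \emph{shortest} long cycle, of length $g := |V(C)| \geqslant \ell$. Since $G$ has no two vertex-disjoint long cycles, every long cycle meets $V(C)$; hence if $g \leqslant 2\ell + 3$ we may simply take $X := V(C)$ and we are done. The whole difficulty is therefore concentrated in the regime $g \geqslant 2\ell + 4$, where $V(C)$ is itself too large to be used as a transversal.

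The key structural tool I would extract from the minimality of $C$ is a geodesic-type property. For $u,v \in V(C)$, write the two arcs of $C$ between them as $P$ and $P'$ with $|E(P)| \leqslant |E(P')|$; since $g \geqslant 2\ell + 4$ we have $|E(P')| \geqslant g/2 \geqslant \ell$, so the cycle formed by $P'$ together with any $u$--$v$ path $Q$ whose interior avoids $C$ is long. Minimality of $C$ then gives $|E(Q)| + |E(P')| \geqslant g$, i.e. $|E(Q)| \geqslant |E(P)|$: \emph{no bridge of $C$ can connect two of its vertices by a path shorter than the short arc joining them} $(\star)$. In particular a bridge whose feet are far apart on $C$ must itself be long.

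Using $(\star)$ I would argue that the attachments on $C$ of any single long cycle $D$ are confined to a sub-arc of $C$ of length on the order of $\ell$: if the vertices of $V(D) \cap V(C)$ were spread across a long portion of $C$, then the sub-paths of $D$ running between consecutive attachments would, by $(\star)$, be forced to be so long that one could splice $D$ with a complementary arc of $C$ to produce a long cycle vertex-disjoint from a second long cycle living on the remainder of $C$, contradicting the hypothesis. Finally, the assumption that no two long cycles are disjoint would be used a second time to \emph{align} these per-cycle arcs into one common arc $A$ of at most $2\ell + 3$ vertices meeting every long cycle; taking $X := V(A)$ then finishes the proof.

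I expect the main obstacle to be exactly this last alignment step together with the bookkeeping of the constant: turning the local confinement of each long cycle into a single transversal of size precisely $2\ell + 3$ requires choosing the common arc carefully and repeatedly invoking the hypothesis to rule out a long cycle escaping on the far side of $C$. It is here that the additive $+3$ (rather than a larger additive term) must be justified, and where a naive argument would lose a constant factor.
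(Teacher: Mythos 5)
This lemma is not proved in the paper at all: it is imported as a black box from Birmel\'e, Bondy and Reed \cite{BBR07}, where it constitutes essentially the main theorem of that paper, with a long proof. So there is no short ``paper proof'' your sketch could be matching, and that alone should raise suspicion about a one-page argument. Your proposal is fine up to and including the geodesic property $(\star)$: taking a shortest long cycle $C$, noting every long cycle meets $C$, disposing of the case $|V(C)| \leqslant 2\ell+3$, and deriving $(\star)$ from minimality are all correct. But everything after $(\star)$ --- the ``confinement'' of the attachments of each long cycle to an arc of length $O(\ell)$, and the ``alignment'' of these arcs into a single arc of at most $2\ell+3$ vertices --- is asserted rather than proven, and in fact both assertions are false, so the strategy cannot be repaired by better bookkeeping.

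Here is a counterexample to the alignment step. Let $\ell \geqslant 3$ and let $G$ be $K_4$, with branch vertices $a,b,c,d$, in which every edge is subdivided into a path with $3\ell$ edges. Every cycle of $G$ arises from a cycle of $K_4$ and has length $9\ell$ or $12\ell$, so all cycles are long; since $K_4$ has no two vertex-disjoint cycles, neither does $G$, so the hypothesis of the lemma holds (and indeed $\{a,b\}$ is a transversal of size $2$). A shortest long cycle is a subdivided triangle, say $C$ through $a,b,c$, of length $9\ell$. The three long cycles through $d$ meet $C$ exactly in the three arcs $ab$, $bc$, $ca$ of $C$, and these arcs pairwise intersect only in branch vertices. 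Any sub-arc $A$ of $C$ with at most $2\ell+3$ vertices spans at most $2\ell+2 < 3\ell$ edges, hence contains at most one of $a,b,c$, hence is disjoint from at least one of the three arcs and misses the corresponding long cycle through $d$. So \emph{no} arc of $C$ of the required size is a transversal; the correct transversal (two branch vertices) is a scattered set, not an arc. The confinement claim fails separately: let $C$ have length $100\ell$, let $u,v$ be antipodal on $C$, and add two internally disjoint $u$--$v$ paths of length $50\ell$ avoiding $C$ internally; their union is a long cycle whose intersection with $C$ is $\{u,v\}$, which lies in no arc of fewer than $50\ell$ vertices. This is why the actual proof in \cite{BBR07} is substantially more involved: under the hypothesis, the small transversal must in general be allowed to consist of a few vertices spread far apart around the chosen cycle, and no argument that only produces an interval of a fixed cycle can succeed.
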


Compared to the two previous lemmas, our next lemma is rather obvious. We nevertheless include a proof for completeness.

\begin{lemma}
\label{lem:disjoint_witnesses}
Let $z$ and $z'$ be two distinct vertices of $G$. Let $C_z$ and $C_{z'}$ denote two cycles of $G$ of length at least $2\ell$ containing $z$ and $z'$, respectively. If $C_z$ and $C_{z'}$ are not disjoint, then $C_z \cup C_{z'}$ contains a $z$--$z'$ path of length at least $\ell$.
\end{lemma}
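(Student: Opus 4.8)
The plan is to analyze how two non-disjoint cycles $C_z$ and $C_{z'}$ overlap, and to extract a long $z$--$z'$ path from their union. Since $C_z$ and $C_{z'}$ share at least one vertex, their intersection $C_z \cap C_{z'}$ decomposes into a disjoint union of paths and isolated vertices; in the union $C_z \cup C_{z'}$ these shared segments are the places where the two cycles agree. The first step is to understand the structure of $C_z \cup C_{z'}$ by considering the vertices where the cycles meet and diverge. I would pick a vertex $w$ in $C_z \cap C_{z'}$ and use it as a reference point to build a path.

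The key idea is a counting/averaging argument. Each cycle has length at least $2\ell$, so between them the total length is at least $4\ell$. I would try to route a $z$--$z'$ walk through the union that collects enough edges to guarantee length at least $\ell$. The cleanest approach: starting from $z$, both directions around $C_z$ reach some common vertex of $C_{z'}$; similarly $z'$ connects into the shared structure via $C_{z'}$. Concretely, fix a vertex $w \in C_z \cap C_{z'}$. Then $C_z$ provides two internally disjoint $z$--$w$ paths $P_1, P_2$ whose lengths sum to $|C_z| \geqslant 2\ell$, and $C_{z'}$ provides two internally disjoint $z'$--$w$ paths $Q_1, Q_2$ whose lengths sum to $|C_{z'}| \geqslant 2\ell$. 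By the pigeonhole principle one of $P_1,P_2$ has length at least $\ell$, say $P_i$, and one of $Q_1,Q_2$ has length at least $\ell$, say $Q_j$. Concatenating $P_i$ (from $z$ to $w$) with $Q_j$ (from $w$ to $z'$) gives a $z$--$z'$ walk of length at least $2\ell \geqslant \ell$.

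The main obstacle is that this concatenation need not be a \emph{path}: $P_i$ and $Q_j$ may share internal vertices beyond $w$, so the resulting walk could repeat vertices. I expect this to be the crux of the argument. To handle it, I would not insist on a full length-$2\ell$ path but only extract a $z$--$z'$ path of length at least $\ell$, and exploit the freedom in choosing $w$ and in choosing which of the two arcs to use. One robust way is to take the walk $P_i \cdot Q_j$ and shortcut it into a path: whenever the walk revisits a vertex, one can excise the closed subwalk in between. The danger is that shortcutting destroys length. To control this, I would instead choose $w$ to be a vertex of $C_z \cap C_{z'}$ that is \emph{extremal} in a suitable sense — for instance, an endpoint of a maximal shared segment — so that at least one of the four arcs leaves the intersection cleanly and stays internally disjoint from the arc it is concatenated with. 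Then the length lower bound of $\ell$ (rather than $2\ell$) provides enough slack to absorb any overlap.

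An alternative, possibly cleaner, formulation is to consider the symmetric difference of the edge sets of $C_z$ and $C_{z'}$ together with their shared edges, and argue directly on the graph $H := C_z \cup C_{z'}$. Since $H$ is a union of two cycles through $z$ and $z'$ respectively and is connected (as they intersect), one can look at the $z$--$z'$ paths in $H$ and bound the longest such path from below using the total edge count $|E(H)| \geqslant |C_z| + |C_{z'}| - |E(C_z \cap C_{z'})|$. The heart of the matter is to show that the shared part cannot be so large as to force every $z$--$z'$ path to be short; here the hypothesis that \emph{both} cycles are long ($\geqslant 2\ell$) is exactly what rules out the bad case, giving the desired path of length at least $\ell$. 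I would present whichever of these two routes yields the least painful case analysis, with the pigeonhole-on-two-arcs version as the backbone.
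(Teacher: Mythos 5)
Your proposal correctly isolates the crux---the concatenated walk $P_i \cdot Q_j$ need not be a path---but none of your proposed repairs actually closes this gap, and the first one provably cannot. Shortcutting the walk gives no length guarantee: if $P_i$ passes through a vertex $u$ at distance $1$ from $z$, and $Q_j$ passes through the same $u$ at distance $1$ from $z'$, then excising the closed subwalk between the two visits of $u$ leaves the path $z,u,z'$ of length $2$, so the $2\ell$ of ``slack'' can be wiped out entirely. The extremal-choice repair is the right instinct, but as stated it is an unproven claim: taking $w$ to be an endpoint of a maximal shared segment controls the two cycles only locally at $w$ and does nothing to prevent the long arc $P_i$ from meeting both $Q_1$ and $Q_2$ internally far away from $w$ (and vice versa); moreover, the pair of arcs that does happen to be internally disjoint need not contain a long arc, and the sentence about slack does not bridge that. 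The edge-counting route on $C_z \cup C_{z'}$ is a heuristic, not an argument. Since you explicitly defer the choice between these unfinished routes, the proof is incomplete at exactly its critical step.

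The missing idea is to break the symmetry: the $C_z$-side arc does not need to carry any length at all; it only needs to be internally disjoint from $C_{z'}$, and that you can get for free. Walk along $C_z$ from $z$ in either direction and let $t$ be the \emph{first} vertex of $C_{z'}$ encountered. By this minimal choice, the $z$--$t$ segment of $C_z$ meets $C_{z'}$ only at $t$, so it concatenates with \emph{either} $t$--$z'$ arc of $C_{z'}$ into a genuine path. Now apply the pigeonhole only on the $C_{z'}$ side: the two $t$--$z'$ arcs have total length $|C_{z'}| \geqslant 2\ell$, so one has length at least $\ell$. That is the paper's entire proof, in two sentences. Your symmetric pigeonhole---insisting that both arcs be long---is precisely what creates the intersection problem you then cannot resolve; sacrificing the length contribution of one side makes the disjointness automatic.
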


\begin{proof}
Follow $C_z$ in any direction from $z$ until the first vertex of $C_{z'}$ is reached, say $t$. One of the two $t$--$z'$ paths in $C_{z'}$ has length at least $\ell$. Thus the desired $z$--$z'$ path exists.
\end{proof}

\section{The proof} \label{sec:proof}

\begin{proof}[Proof of Theorem \ref{thm:main}]
We prove the theorem with $f(k) := (2 \ell + 4)(k-1) + (3\ell/2 + 1)s_k$, by induction on $k$. For $k \leqslant 1$, the theorem obviously holds. From now on, we assume $k \geqslant 2$. Because $f(k-1) + 2\ell \leqslant f(k)$, we may assume without loss of generality that $G$ does not contain any cycle of length between $\ell$ and $2 \ell$. 

Let $H$ denote a subgraph of $G$ with the following properties:
\begin{enumerate}[(i)]
\item all vertices $v$ of $H$ have degree $2$ or $3$ in $H$;
\item $H$ contains no short cycle;
\item the size of $U := \{v \in V(H) \mid \deg_H(v) = 3\}$ is maximum. \end{enumerate}
Notice that all components of $H-U$ are either cycles or paths. Among all subgraphs $H$ satisfying (i)--(iii), we choose one such that the number of cycles in $H-U$ is maximum. 

Let $U'$ denote the set of vertices of $H$ whose distance in $H$ to $U$ is at most $\ell/2$. In a formula,
$$
U' := \{v \in V(H) \mid d_H(U,v) \leqslant \ell/2\}\ .
$$

Consider a $H$-path $P$ that avoids $U'$. We claim that the endpoints of $P$ are in the same component of $H-U$. Suppose that $P$ has its endpoints in two different components of $H-U$. If one of these components is a cycle, then $H + P$ satisfies (i)--(ii) but has two more degree-$3$ vertices, a contradiction. If the two components are paths, then $H+P$ always satisfies (i), and also satisfies (ii) unless a short cycle appears when $P$ is added to $H$. This short cycle intersects $U$. In particular, one of the two endpoints of $P$ is at distance at most $\ell/2$ from $U$. Hence $P$ is not disjoint from $U'$, in contradiction with our hypotheses. Therefore, our claim holds.

Now consider a long cycle $C$ that avoids $U'$.

By choice of $H$, this cycle $C$ has some vertex in $H$, because otherwise we could add $C$ to $H$ and increase the number of cycles in $H-U$ without changing the size of $U$, contradicting our choice of $H$. 

It could well be that $C$ meets $H$ in exactly one vertex. For now, we assume that $C$ contains at least two vertices of $H$. By the above claim, $C$ meets exactly one component of $H-U$, say $K$. Let $K'$ denote the subgraph of $G$ obtained by adding to $K$ all the $H$-paths $P$ with both endpoints in $K$. Then $C$ is contained in $K'$.\medskip

\noindent \emph{Case 1.} $K$ is a cycle. We claim that $K'$ does not contain two vertex-disjoint long cycles. Otherwise, we could redefine $H$ by replacing $K$ by these two cycles and contradict the choice of $H$. By Lemma \ref{lem:nu_eq_1}, $K'$ contains a set of at most $2 \ell + 3$ vertices that meets all the long cycles in $K'$.\medskip

\noindent \emph{Case 2.} $K$ is a path. We claim that this case cannot occur. Indeed, let $u$ and $v$ denote the two endpoints of $K$. We can redefine $H$ by replacing $K$ by the subgraph of $K'$ obtained from the long cycle $C$ by connecting $u$ and $v$ to this cycle through $K$. Because two of the vertices of $K$ are in $C$, and $C$ avoids $U'$, this operation preserves properties (i) and (ii). But this operation increases the number of vertices in $U$, and thus contradicts our choice of $H$.

\medskip

Now to the conclusion. Let $\mathcal{C}$ denote the set of long cycles that avoid $U$ and meet $H$ in exactly one vertex. Let $Z$ denote the set of vertices of $H$ that are in some cycle of $\mathcal{C}$. For each vertex $z \in Z$, pick a {\DEF witness} cycle $C_z$ in $\mathcal{C}$. Any two distinct witness cycles $C_z$ and $C_{z'}$ are disjoint because otherwise, by Lemma \ref{lem:disjoint_witnesses}, we could find in their union a $H$-path with endpoints $z$ and $z'$ of length at least $\ell$. Adding such an $H$-path to $H$ does not create a short cycle and increases the number of vertices in $U$, a contradiction. Therefore, 
$$
\mathcal{C}' := \{C_z \mid z \in Z\}
$$
is a collection of vertex-disjoint long cycles.

Let $\mathcal{D}$ denote the set of cycles in $H-U$ that are disjoint from $Z$. Thus, $\mathcal{C}' \cup \mathcal{D}$ is also a collection of vertex-disjoint long cycles. 

If $\mathcal{C}' \cup \mathcal{D}$ contains at least $k$ cycles, then the theorem holds. Assume now that $|\mathcal{C}' \cup \mathcal{D}| \leqslant k-1$.

Now that we have failed to produce a large packing of long cycles, we try for a small transversal of the long cycles. The transversal is obtained as follows:

\begin{itemize}
\item in each component $K$ of $H-U$ that is a cycle, pick a set of at most $2 \ell + 3$ vertices that meet all the long cycles in $K'$ (by Lemma \ref{lem:nu_eq_1}, such a set exists);
\item add all the vertices of $Z$;
\item add all the vertices of $U'$.
\end{itemize}

The total number of vertices in the transversal is bounded by
\begin{eqnarray*}
(2 \ell + 3) (|\mathcal{C}' \cup \mathcal{D}|) + |Z| + |U'|
&\leqslant & (2 \ell + 4) (k-1) + |U'|\\
&\leqslant & (2 \ell + 4) (k-1) + (3\ell/2 + 1) |U|\ .
\end{eqnarray*}
If $|U| \leqslant s_k$ then the theorem holds. Otherwise, we have $|U| \geqslant s_k$ and by Lemma \ref{lem:cubic} applied to the multigraph obtained from $H$ by suppressing all degree-$2$ vertices, $H$ contains $k$ vertex-disjoint cycles. By choice of $H$, these cycles are all long. Thus the theorem holds in this case also.
\end{proof}

\section{Acknowledgements}

We thank Gwena\"el Joret for stimulating discussions and feedback.

\bibliographystyle{plain}

\bibliography{long_cycles}

\end{document}